\numberwithin{equation}{section}
\newtheorem {definition} {Definition} [section]
\newtheorem {theorem} [definition]{Theorem}
\newtheorem {proposition} {Proposition}[section]
\newtheorem {lemma}  [proposition]{Lemma}
\newcommand{\fg}[1]{\sqrt{#1}}
\newcommand{\fo}{{\vec{f}^{\omega}}}
\begin{document}
\setlength{\parindent}{4ex} \setlength{\parskip}{1ex}
\setlength{\oddsidemargin}{0mm} \setlength{\evensidemargin}{0mm}
\title{{Random-data Cauchy Problem for the Periodic Navier-Stokes Equations
 with Initial Data in Negative-order Sobolev Spaces}\footnote{This work is supported by the
China National Natural Science Fundation under the Grant
no.10771223.}}

\author{Chao Deng$^\dag$, \ \   \ \  \ \ Shangbin Cui\footnote{E-mail:
deng315@yahoo.com.cn; cuisb3@yahoo.com.cn}\\ [0.2cm] {\small
Department of Mathematics, Sun Yat-Sen
University, Guangzhou, }\\
{\small Guangdong 510275, People's Republic of China }}
\date{}

\maketitle
\begin{abstract}
In this paper we study existence of solutions of the
initial-boundary value problems of the Navier-Stokes equations with
a periodic boundary value condition for initial data in the Sobolev
spaces $\mathcal{H}^{s}(\mathbb{T}^N)$ with a negative order
$-1<s<0$, where $N=2, 3$. By using the randomization approach of N.
Burq and N. Tzvetkov, we prove that for almost all
$\omega\in\Omega$, where $\Omega$ is the sample space of a
probability space $(\Omega,\mathcal{A},p)$, for the randomized
initial data
$\vec{f}^\omega\in\mathcal{H}_{\sigma}^{s}(\mathbb{T}^N)$ with
$-1<s<0$, such a problem has a unique local solution.

{\bf Keywords:} Navier-Stokes equations; initial-boundary value
problem; solution; randomization.

{\bf Mathematics Subject Classification: \,} 35Q30, 76D06
\end{abstract}
\maketitle


\section{Introduction}
In this paper we study the initial-boundary value problem  of the
Navier-Stokes equations with a periodic boundary value condition:
\begin{equation} \label{eq1.1}
\left\{
\begin{aligned}
  &{\partial_t{\vec{u}}}-\Delta
  \vec{u}+(\vec{u}\cdot\nabla)\vec{u}+\nabla P = 0 \ \ \
  \text{ in }\ \mathbb{R}_+\times \mathbb{R}^N,\\
 & \nabla\cdot\vec{u}=0 \ \ \   \text{in} \  \mathbb{R}_+\times \mathbb{R}^N,\\
 & \vec{u}\;\;\mbox{and}\;\, P \;\, \text{are} \;\,
 \mbox{$1$-periodic in space variables},
 \\
 &\vec{u}(0,x) = \vec{u}_0(x) \ \ \   \text{ for }\  x\in\mathbb{R}^N.
\end{aligned}
\right.
\end{equation}
Here $\vec{u}=\vec{u}(t,x)=\!(u_1(t,x), \cdots,\!u_N(t,x))$ and
$P=P(t,x)$ are unknown vector and scaler functions, respectively,
and $\vec{u}_0=\vec{u}_0(x)=(u_1(x),\cdots, u_N(x))$ is a given
vector function which is $1$-periodic and satisfies the divergence
free condition $\nabla\cdot\vec{u}_0=0$.

Mathematical analysis of the Navier-Stokes equations has a long
history. It goes back to the beginning of the twentieth century. In
\cite{LER34} Leray introduced the concept of weak solutions and
proved existence of global weak solutions associated with
$L^2(\mathbb{R}^N)$ initial data by using an approximation approach
and some weak compactness argument. In \cite{FUJK64} Kato and Fujita
initiated a different approach. They proved well-posedness of the
initial value problem (IVP) of the Navier-Stokes equations in
$\mathcal{H}^s(\mathbb{R}^N)$ for $s\geq\frac{N}{2}-1$, i.e.,
(local) existence and uniqueness of solutions for
$\vec{u}_0\in\mathcal{H}^s(\mathbb{R}^N)$ ($s\geq\frac{N}{2}-1$)
which still belongs to this space for $t>0$ and continuously depends
on the initial data $\vec{u}_0$. This approach was later extended to
various other function spaces by many authors, cf. \cite{CAN04} and
\cite{LEM02} for expositions and references cited therein. In
particular, a famous result due to Cannone and Planchon says that
the IVP of the Navier-Stokes equations is well-posed in the Besov
space $B^s_{p,q}(\mathbb{R}^N)$ for $s>-1$, $ p\geq\frac{N}{1+s}$
and $1\leq q\leq\infty$ (see \cite{CAN95} and \cite{PLA96}). Another
interesting result that must be mentioned is due to Koch and Tataru
(see \cite{KOCT01}). They proved that the IVP of the Navier-Stokes
equations is well-posed in $BMO^{-1}$. This is the largest function
space in which well-posedness of the IVP of the Navier-Stokes
equations has been established. Note that though not rigorously
proved, it is commonly believed that similar results hold true for
the initial-boundary value problem of the Navier-Stokes equations
with a periodic boundary condition.

All the function spaces in which the IVP of the Navier-Stokes are
known to be well-posed are either critical or subcritical spaces. To
explain these concepts let us first consider the scaling invariance
property of these equations. For any $\lambda>0$ we denote
$$
  \vec{u}_\lambda(x,t)=\lambda\vec{u}(\lambda x,\lambda^2 t), \quad
  P_\lambda(x,t)=\lambda P(\lambda^2 x,\lambda^2 t).
$$
It can be easily seen that if $(\vec{u},P)$ is a solution of the
Navier-Stokes equations, then so is $(\vec{u}_\lambda,P_\lambda)$
for any $\lambda>0$. Hence we introduce the scaling
$\varphi_\lambda(x)=\lambda\varphi(\lambda x)$. A function space $X$
on $\mathbb{R}^N$ is respectively called critical, subcritical and
supercritical to the Navier-Stokes equations if its corresponding
homogeneous version space $\dot{X}$ respectively satisfies the
properties $\|\varphi_\lambda\|_{\dot{X}}=\|\varphi\|_{\dot{X}}$,
$\|\varphi_\lambda\|_{\dot{X}}=\lambda^{\mu}\|\varphi\|_{\dot{X}}$
and $\|\varphi_\lambda\|_{\dot{X}}=
\lambda^{-\mu}\|\varphi\|_{\dot{X}}$ for some $\mu>0$ and any
$\lambda>0$ and $\varphi\in\dot{X}$. Though not proved yet, it is
strongly conjectured that in supercritical spaces the IVP of the
Navier-Stokes equations are usually ill-posed. Hence, existence of
solutions for the IVP of the Navier-Stokes equations for initial
data in supercritical spaces are not known except for the
$L^2(\mathbb{R}^N)$ space (which is supercritical for $N\geq 3$ but
critical for $N=2$) and sums of this space with some well-posedness
spaces (cf. \cite{CAL90}, \cite{LEM02} and the recent work of the
second author \cite{CUI}).

Inspired by the interesting works of Burq and Tzvetkov \cite{BT081,
BT082} on the random-data initial-boundary value problems of the
nonlinear wave equations, in this paper we want to construct
solutions for the problem (1.1) for a random class of initial data
in $\mathcal{H}^{s}(\mathbb{T}^N)$ with $-1<s<0$, where
$\mathcal{H}^s(\mathbb{T}^N)$ is the classical $L^2$-type Sobolev
space of index $s$ on the torus $\mathbb{T}^N$, equipped with the
norm $\|\vec{u}\|_{\mathcal{H}^s(\mathbb{T}^N)}=
\|(1-\Delta)^{\frac{s}{2}}\vec{u}\|_{L^2(\mathbb{T}^N)}$. Note that
for any $s<0$, $\mathcal{H}^{s}(\mathbb{T}^N)$ is a supercritical
space, so that it cannot be embedded into any critical or
subcritical spaces (if a space $X$ is embedded into a critical or a
subcritical space $Y$, then $X$ itself must be either a critical or
a subcritical space), and cannot be decomposed into a sum of the
$L^2(\mathbb{R}^N)$ with a critical or a subcritical space either.
Hence existence of solution for initial data in this space can not
be obtained from known results. We must study this problem with new
ideas.

Let us now turn to present the main results of this paper. For this
purpose we first introduce some essential concepts and notations.

First, for a Banach space $X$ of $N$-vector functions or $N$-vector
distributions on $\mathbb{T}^N$, we use the notation $X_\sigma$ to
denote the corresponding closed subspace of $X$ consisting of all
divergence-free elements in $X$, i.e., $\vec{u}\in X_\sigma$ if and
only if $\vec{u}\in X$ and $\nabla\cdot\vec{u}=0$.

Let $(\Omega,\mathcal{A},p)$ be a probability space. We consider a
sequence of independent and identically distributed Gaussian random
variables $(g_n(\omega))_{n\ge0}$, i.e., $g_n\in \mathcal{N}(0,1)$.
Let $(\lambda^2_n)_{n\ge0}$ be the sequence of eigenvalues of
$-\Delta$ on $\mathbb{T}^N$, ordered increasingly and in their
multiplicities and we denote by $(e_n)_{n\ge0}\subseteq
C^{\infty}(\mathbb{T}^N)$  the corresponding sequence of
eigenfunctions, normalized so that they form an orthonormal basis of
$L^2(\mathbb{T}^N)$.
 For any $f\in \mathcal{H}^s(\mathbb{T}^N)$, we have
\begin{equation}\label{eq1.2}
f(x)=\sum_{n\ge0}\alpha_ne_n(x),\ \ \ \alpha_n\in\mathbb{R},\ \ \
\|f\|^2_{\mathcal{H}^s}\approx \sum_{n\ge0}|\alpha_n|^2(1+|\lambda_n|^2)^{s}\Big.
\end{equation}
We consider the map
\begin{equation}\label{eq1.3}
\omega\longmapsto
 f^{\omega}(x)=\sum_{n\ge0}\alpha_ng_n(\omega)e_n(x)\end{equation}
 from $\Omega$ to $\mathcal{H}^s(\mathbb{T}^N)$ (equipped with the
 Borel sigma algebra). Then $f^\omega$ belongs to $L^2(\Omega;
 \mathcal{H}^s(\mathbb{T}^N))$ (cf. \cite{BT081, BT082}) and the map
\eqref{eq1.3} is measurable. The
$\mathcal{H}^s(\mathbb{T}^N)$-valued random variable $f^{\omega}$ is
called the $randomization$ of $f$. Similarly, we define the
randomization of a vector valued function $\vec{f}=(f_1,\dots, f_N)$
by
\begin{equation}\label{eq1.4}\vec{f}^{\omega}(x)=\sum_{n\ge0}({\alpha}^1_{n},
\cdots, {\alpha}^N_{n})e_n(x)g_n(\omega),\end{equation} where
\begin{equation}\nonumber\label{eq1-5}f_1=\sum_{n\ge0}{\alpha}^1_{n}e_n(x),\
\cdots, f_N=\sum_{n\ge0}{\alpha}^N_{n}e_n(x).\end{equation}

 For $1\le q\leq\infty$, $1\le m<\infty$ and $\delta>0$, we introduce
 the following notation: For any measurable function $g(t,x)$ on
 $[0,T]\times\mathbb{T}^N$,
\begin{equation}\label{def1.1}
  \|g\|_{L^{m}_{\delta;T}L^{q}_x}
 =\Big(\int_0^{T}\|t^{\delta}g(t,\cdot)\|^{m}_{L^{q}(\mathbb{T}^N)}dt\Big)^{\frac{1}{m}}.
\end{equation}

  The main results of this paper are as follows:

\begin{theorem}\label{thm1.1}
 Let $N=2,3$ and $-1+\frac{N}{4}<s<0$. Given $\vec{f}\in\mathcal{H}^{s}_\sigma(\mathbb{T}^N)$,
 let $\vec{f}^{\omega}\in L^2(\Omega;\mathcal{H}^{s}_\sigma(\mathbb{T}^N))$ be the
 randomization of $\vec{f}$ given by \eqref{eq1.4}. Then there is an event $\Sigma
 \subseteq\Omega$ with $p(\Sigma)=1$, such that for any $\omega\in\Sigma$, there exists
 a corresponding $T_{\omega}>0$ such that the problem \eqref{eq1.1} with $u_0=\vec{f}^{\omega}$
 has a unique solution $\vec{u}$ for $0\leq t\leq T_{\omega}$, satisfying
\begin{equation}\label{eq1.6}
 \vec{u}-e^{t\Delta}
 \vec{f}^{\omega}\in C([0,T_{\omega}];\mathcal{H}^{s}_{\sigma}(\mathbb{T}^N))
 \bigcap L^{m}_{\delta;T_{\omega}}L^{4}_{x,\sigma}\bigcap
 L^{\frac{8}{4-N}}_{T_{\omega}}L^{4}_{x,\sigma},
 \end{equation}
where $\frac{8}{4-N}<m\le\frac{16}{4-N}$ and
$\delta=\frac{4-N}{8}-\frac{1}{m}$. More precisely, there exist
constants $c_1,c_2>0$ and for every $0<T\leq 1$ an event
$\Omega_{T}$ with the property
\begin{equation}\nonumber
 p(\Omega_{T})\ge
 1-c_1e^{-c_2\|f\|_{\mathcal{H}^s}^{-2}T^{-2\varrho}},
\end{equation}
where $\varrho= \min\{\frac{1}{m},\frac{s}{2}+\!\frac{4-\!N}{8}\}$,
such that for every $\omega\in\Omega_{T}$, the problem \eqref{eq1.1}
with $u_0=\vec{f}^{\omega}$ has a unique solution for $0\leq t\leq
T$ satisfying a similar property as \eqref{eq1.6} $($with
$T_{\omega}$ replaced by $T)$.
\end{theorem}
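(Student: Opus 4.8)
The plan is to solve \eqref{eq1.1} by the Fujita--Kato method applied to the nonlinear remainder after subtracting the free evolution, the point being that although $\vec{f}^{\omega}$ has negative regularity, its randomization makes the heat flow $e^{t\Delta}\vec{f}^{\omega}$ gain spatial integrability almost surely. First I would eliminate the pressure with the Leray projection $\mathbb{P}$ and pass to the mild formulation, seeking $\vec{u}=e^{t\Delta}\vec{f}^{\omega}+\vec{w}$ where
\begin{equation}\nonumber
 \vec{w}=-\int_0^t e^{(t-\tau)\Delta}\mathbb{P}\,\nabla\!\cdot\!\big((e^{\tau\Delta}\vec{f}^{\omega}+\vec{w})\otimes(e^{\tau\Delta}\vec{f}^{\omega}+\vec{w})\big)\,d\tau=:\Phi_\omega(\vec{w}).
\end{equation}
Here I use that the randomization \eqref{eq1.4} multiplies each divergence-free Fourier mode by the single scalar $g_n(\omega)$, so $\vec{f}^{\omega}$ stays in $\mathcal{H}^s_\sigma(\mathbb{T}^N)$ and $\mathbb{P}$ commutes with the whole construction. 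The strategy is to run a contraction for $\Phi_\omega$ on a small ball of $E_T:=C([0,T];\mathcal{H}^s_\sigma)\cap L^m_{\delta;T}L^4_{x,\sigma}\cap L^{8/(4-N)}_{T}L^4_{x,\sigma}$, treating $e^{t\Delta}\vec{f}^{\omega}$ as a given datum whose size is controlled probabilistically.

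The heart of the argument is the probabilistic linear estimate: with the stated probability, $e^{t\Delta}\vec{f}^{\omega}$ lies in $L^m_{\delta;T}L^4_x\cap L^{8/(4-N)}_{T}L^4_x$ with norm that is small as $T\to0$. For this I write $e^{t\Delta}\vec{f}^{\omega}=\sum_n \vec{\alpha}_n e^{-t\lambda_n^2}e_n(x)g_n(\omega)$ and use the Khinchin-type inequality for Gaussians: for $q\ge2$ the $L^q(\Omega)$ moment of such a series is comparable, up to a factor $\sqrt{q}$, to its $L^2(\Omega)$ moment. Taking $L^q(\Omega)$ norms, applying Minkowski to move the $\Omega$-integration inside the space--time norms, and using $|e_n(x)|\lesssim1$ on the torus reduces matters to the convergence of $\sum_n|\vec{\alpha}_n|^2 e^{-2t\lambda_n^2}$ weighted by $t^{m\delta}$ and integrated in $t$; the condition $-1+\tfrac{N}{4}<s$ together with $\delta=\tfrac{4-N}{8}-\tfrac1m$ is exactly what makes this converge. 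A Chebyshev argument on these moment bounds upgrades the $L^q(\Omega)$ control into the sub-Gaussian tail $p(\Omega_T)\ge1-c_1e^{-c_2\|f\|_{\mathcal{H}^s}^{-2}T^{-2\varrho}}$, with $\varrho=\min\{\tfrac1m,\tfrac s2+\tfrac{4-N}{8}\}$ recording which of the two space--time norms governs the small-$T$ decay.

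Next I would prove the deterministic bilinear estimate for $B(\vec{u},\vec{v})=\int_0^t e^{(t-\tau)\Delta}\mathbb{P}\nabla\!\cdot\!(\vec{u}\otimes\vec{v})\,d\tau$, via the smoothing bound $\|e^{(t-\tau)\Delta}\mathbb{P}\nabla\!\cdot\! F\|_{L^4_x}\lesssim(t-\tau)^{-\frac12-\frac N8}\|F\|_{L^2_x}$ together with $\|\vec{u}\otimes\vec{v}\|_{L^2_x}\le\|\vec u\|_{L^4_x}\|\vec v\|_{L^4_x}$. A Hardy--Littlewood--Sobolev convolution estimate in $t$, carrying the weights $t^\delta$, then bounds $B$ as a map $E_T\times E_T\to E_T$ with operator norm a positive power of $T$; the singularity $(t-\tau)^{-1/2-N/8}$ is integrable precisely because $N<4$. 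The three contributions---the source $B(e^{\cdot\Delta}\vec{f}^{\omega},e^{\cdot\Delta}\vec{f}^{\omega})$, the two mixed terms, and $B(\vec w,\vec w)$---are all handled this way, the source being the most restrictive and again forcing $s>-1+\tfrac N4$. Combining the probabilistic control of the data norm with the $T$-power gain, the Banach fixed-point lemma yields a unique $\vec{w}\in E_T$ on $\Omega_T$ once $T$ is small relative to the (a.s. finite) data norm. Finally, to obtain the full-measure set I take $\Sigma=\bigcup_k\Omega_{T_k}$ along a sequence $T_k\to0$; the tail bound gives $p(\Omega_{T_k})\to1$, hence $p(\Sigma)=1$, and for $\omega\in\Sigma$ one sets $T_\omega$ to any admissible $T_k$. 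I expect the main obstacle to be this probabilistic linear estimate with the sharp time weight---balancing the gain of spatial integrability from randomization against the temporal singularity at $t=0$ created by the negative regularity, so as to land precisely in the weighted spaces $L^m_{\delta;T}L^4_x$ while retaining a quantitative small-$T$ tail.
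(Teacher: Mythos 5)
Your overall strategy is the same as the paper's: the same mild formulation and ansatz $\vec{u}=e^{t\Delta}\vec{f}^{\omega}+\vec{w}$, the same resolution spaces $L^{m}_{\delta;T}L^{4}_{x,\sigma}\cap L^{\frac{8}{4-N}}_{T}L^{4}_{x,\sigma}$ plus a continuity-in-time component, the same probabilistic linear estimates (Gaussian moment bound, Minkowski exchange of $\Omega$- and space--time norms, the uniform bound $\|e_n\|_{L^q}\lesssim 1$, then Chebyshev with optimized $r$ to get the sub-Gaussian tail with exponent $T^{-2\varrho}$), and the same conclusion via $\Sigma=\bigcup_k\Omega_{T_k}$.

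There is, however, one step that is wrong as stated and that you explicitly invoke when closing the fixed point: the claim that the bilinear map $B$ acts on $E_T\times E_T\to E_T$ \emph{with operator norm a positive power of $T$}. No such gain exists, because the norms involved are exactly critical for the Navier--Stokes scaling. This is visible in the exponent bookkeeping of the paper's proof: in the $\mathcal{H}^{(N-2)/2}$ estimate one has $\bigl(\tfrac12+\tfrac{N-2}{4}+2\delta\bigr)q=1$ with $\tfrac1q=1-\tfrac2m$, so the time integral
\begin{equation}\nonumber
\int_0^t\Bigl[(t-\tau)^{-\frac12-\frac{N-2}{4}}\,\tau^{-2\delta}\Bigr]^{q}\,d\tau
\end{equation}
is a beta-function constant independent of $t$ and $T$; likewise the Hardy--Littlewood--Sobolev step for the $L^m_{\delta;T}L^4_x$ bound is applied at the critical index $\tfrac{N+4}{8}=1+\tfrac1m-\bigl(\tfrac1m+\tfrac{4-N}{8}\bigr)$, again giving a $T$-uniform constant. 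Hence you cannot extract smallness of the quadratic term by shrinking $T$ while only knowing the data norm is finite. The correct closure---and the paper's---draws all smallness from the probabilistic event: on $\Omega_T=\Omega\setminus E_{\lambda,\vec{f},T}$ the free evolution has norm at most $\lambda$, one contracts $K^{\omega}_{\vec{f}}$ on the ball $\overline{B}(0,\lambda)$ using the $T$-uniform estimates $\|K^{\omega}_{\vec{f}}(\vec{v})\|_X\le C_1(\lambda^2+\|\vec{v}\|_X^2)$ and the corresponding difference bound, and fixes $\lambda\le\lambda_0=\min\{(2C_1)^{-1},(6C_2)^{-1}\}$ once and for all; $T$ enters only through $p(\Omega_T)\ge 1-2c_1e^{-c_2(\lambda/2)^2\|f\|_{\mathcal{H}^s}^{-2}T^{-2\varrho}}$. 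Since your proposal already contains the needed probabilistic control of the data norm, this is a local repair rather than a change of strategy, but as written the contraction step rests on a gain that does not exist. Two minor further remarks: the restriction $s>-1+\tfrac N4$ is forced by the \emph{linear} probabilistic estimate (it is what makes the exponent $\tfrac s2+\tfrac{4-N}{8}$ of $T$ positive), not by the bilinear estimate on the source term; and the paper runs the continuity component in $C([0,T];\mathcal{H}^{(N-2)/2}_{\sigma})$, which is of nonnegative order and embeds into your $C([0,T];\mathcal{H}^{s}_{\sigma})$, so the paper's conclusion is slightly stronger than the space $E_T$ you propose.
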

\begin{theorem}\label{thm1.2}
 Let $N=2,3$ and $-1<s\leq -1+\frac{N}{4}$. Given $\vec{f}\in\mathcal{H}^{s}_\sigma(\mathbb{T}^N)$,
 let $\vec{f}^{\omega}\in L^2(\Omega;\mathcal{H}^{s}_\sigma(\mathbb{T}^N))$ be the
 randomization of $\vec{f}$ given by \eqref{eq1.4}. Then there is an event $\Sigma
 \subseteq\Omega$ with $p(\Sigma)=1$, such that for any $\omega\in\Sigma$, there exists
 a corresponding $T_{\omega}>0$ such that the problem \eqref{eq1.1} with $u_0=\vec{f}^{\omega}$
 has a unique solution $\vec{u}$ for $0\leq t\leq T_{\omega}$, satisfying
\begin{equation}\label{eq1.7}
 \vec{u}-e^{t\Delta}
 \vec{f}^{\omega}\in L^{\frac{4}{1-s}}_{T_{\omega}}L^{\frac{2N}{1+s}}_{x,\sigma}.
\end{equation}
More precisely, there exist constants $c_1,c_2>0$ and for every
$0<T\leq 1$ an event $\Omega_{T}'$ with the property
\begin{equation}\nonumber
 p(\Omega_{T}')\ge
 1-c_1e^{-c_2{\lambda^2}\|f\|_{\mathcal{H}^{s}}^{-2}T^{-\frac{1+s}{2}}},
\end{equation}
such that for every $\omega\in\Omega_{T}'$, the problem
\eqref{eq1.1} with $u_0=\vec{f}^{\omega}$ has a unique solution for
$0\leq t\leq T$ satisfying a similar property as \eqref{eq1.7}
$($with $T_{\omega}$ replaced by $T)$.
\end{theorem}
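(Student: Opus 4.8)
The plan is to prove both theorems by the standard Burq–Tzvetkov randomization strategy: decompose the unknown as $\vec{u}=e^{t\Delta}\vec{f}^\omega+\vec{v}$, where the free evolution $e^{t\Delta}\vec{f}^\omega$ is handled probabilistically and the remainder $\vec{v}$ is constructed deterministically by a fixed-point argument. Writing the Duhamel formula for the Navier–Stokes system in its integral (mild) form with the Leray projector $\mathbb{P}$ onto divergence-free fields, the equation for $\vec{v}$ becomes $\vec{v}=-\int_0^t e^{(t-\tau)\Delta}\mathbb{P}\,\nabla\!\cdot\!\big((\vec{w})\otimes(\vec{w})\big)\,d\tau$ with $\vec{w}=e^{\tau\Delta}\vec{f}^\omega+\vec{v}$. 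I would set up a map $\Phi(\vec{v})$ given by this integral and seek a fixed point in the metric space defined by the norms appearing in \eqref{eq1.6} (respectively \eqref{eq1.7}), namely $C([0,T];\mathcal{H}^s_\sigma)\cap L^m_{\delta;T}L^4_{x,\sigma}\cap L^{8/(4-N)}_T L^4_{x,\sigma}$ in the range $-1+\frac{N}{4}<s<0$, and $L^{4/(1-s)}_T L^{2N/(1+s)}_{x,\sigma}$ in the range $-1<s\le -1+\frac{N}{4}$.

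The probabilistic input enters through improved space–time integrability of the randomized linear flow. First I would establish, via the standard Gaussian large-deviation / Khinchin-type estimate for $\sum_n \alpha_n g_n(\omega) e_n$, that $e^{t\Delta}\vec{f}^\omega$ lies with high probability in the relevant $L^m_{\delta;T}L^4_x$ and $L^{8/(4-N)}_T L^4_x$ (or $L^{4/(1-s)}_T L^{2N/(1+s)}_x$) spaces, with a quantitative bound of the form $p\big(\|e^{t\Delta}\vec{f}^\omega\|_{X_T}>\lambda\big)\le c_1\exp(-c_2\lambda^2/\|f\|_{\mathcal{H}^s}^2)$. This is the mechanism that gains regularity almost surely: although $\vec{f}^\omega$ only lives in the supercritical space $\mathcal{H}^s$ with $s<0$, randomization distributes the frequency content so that the $L^q_x$-integrability improves for almost every $\omega$. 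The precise power $T^{-2\varrho}$ (resp.\ $T^{-(1+s)/2}$) in the probability bound comes from tracking the $t$-weight $t^\delta$ and the smoothing factor $(t-\tau)^{-\cdots}$ against the frequency decay $(1+\lambda_n^2)^{s}$ when summing the Gaussian series.

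Next I would prove the deterministic nonlinear estimates. The key bilinear estimate controls $\big\|\int_0^t e^{(t-\tau)\Delta}\mathbb{P}\,\nabla\!\cdot\!(\vec{a}\otimes\vec{b})\,d\tau\big\|_{X_T}$ by $\|\vec{a}\|_{X_T}\|\vec{b}\|_{X_T}$ (and mixed terms where one factor is the linear flow), using the smoothing property $\|e^{\tau\Delta}\nabla\!\cdot\|_{L^p\to L^q}\lesssim \tau^{-1/2-\frac{N}{2}(1/p-1/q)}$ together with Hölder in time and the fractional-integration structure created by the time weight $t^\delta$. Exactly here the constraints $\frac{8}{4-N}<m\le\frac{16}{4-N}$, $\delta=\frac{4-N}{8}-\frac{1}{m}$ and the exponent pairs in \eqref{eq1.7} are forced, so that the temporal singularities are integrable and the scaling balances. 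Once these estimates hold, a standard contraction-mapping argument on a small ball produces, for each $\omega$ in the high-probability event $\Omega_T$ (resp.\ $\Omega_T'$), a unique fixed point $\vec{v}$; uniqueness follows from the same bilinear bound. Finally, intersecting the events over a sequence $T_k\to 0$ and using the lower bound on $p(\Omega_{T})$ gives a full-measure set $\Sigma$ on which a local solution with lifespan $T_\omega>0$ exists.

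The main obstacle, and where most care is required, is the interplay between the two theorems' endpoint regularity and the time-weighted norms. For the more singular range $-1<s\le -1+\frac{N}{4}$ the linear flow fails to be in $C([0,T];\mathcal{H}^s)$-compatible spaces with good integrability, so one must work purely in the scaling-critical $L^{4/(1-s)}_T L^{2N/(1+s)}_x$ norm and verify that the bilinear estimate still closes despite the worse low-frequency behavior; confirming integrability of the time singularity near $\tau=0$ in this endpoint regime, and extracting the sharp $T$-power that appears in $p(\Omega_T')$, is the delicate step that I expect to demand the most effort.
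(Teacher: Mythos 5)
Your proposal follows essentially the same route as the paper: the decomposition $\vec{u}=e^{t\Delta}\vec{f}^{\omega}+\vec{v}$, the Gaussian/Khinchin-type moment bound yielding exponential tail estimates for the randomized heat flow in $L^{4/(1-s)}_{T}L^{2N/(1+s)}_{x}$ (the paper's Lemma 2.3(i)), the bilinear heat-smoothing estimate in that Serrin-critical space (the paper's estimate (3.12)), and a contraction argument on a small ball for $\omega$ outside the bad event. One small correction at the final step: since the events $\Omega_{T}'$ \emph{increase} as $T$ decreases (the norms are monotone in $T$), the full-measure set must be the \emph{union} $\Sigma=\bigcup_{j}\Omega_{T_j}'$ over a sequence $T_j\to 0$ with $p(\Omega_{T_j}')\ge 1-2^{-j}$, as the paper does; literally intersecting the events, as you wrote, would only return the event with the largest $T_j$, whose probability stays strictly below one.
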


{\bf Remark}.\ \  First, similarly as in \cite{BT081} we can prove
that for any $s\in\mathbb{R}$ and $\epsilon>0$, if
$\vec{f}\in\mathcal{H}_{\sigma}^s(\mathbb{T}^N)$ is such that
$\vec{f}\notin\mathcal{H}_{\sigma}^{s+\epsilon}(\mathbb{T}^N)$ then
for almost all $\omega\in\Omega$, $\vec{f}^{\omega}\in
\mathcal{H}_{\sigma}^{s}(\mathbb{T}^N)$ but $\vec{f}^{\omega}\notin
\mathcal{H}_{\sigma}^{s+\epsilon}(\mathbb{T}^N)$. Hence the
randomization has no regularizing effect. Next, though the solution
ensured by Theorems \ref{thm1.1} and \ref{thm1.2} are local
solutions, the initial data that we consider can be large, so that
these results are not small data results. Thirdly, note that the
solution satisfies the relation
$\nabla\cdot(\vec{u}-e^{t\Delta}\vec{f}^{\omega})=0$. We shall not
repeat this fact later on. Finally, though our discussion is only
made for the cases $N=2, 3$, from the proof of Theorem \ref{thm1.2}
we easily see that this result (Theorem \ref{thm1.2}) can be
straightforwardly extended to all the cases $N\ge4$ and $-1<s\le 0$.

Later on we shall use $C$ to denote constant which depends only on $N$ and may change
from line to line. We also use $c$ to denote constant which depends on
$N$ and might depends on $s$ and may change from line to line.  We use
the notations $A\sim B $ and $  A\lesssim
B$ stand for $C^{-1}B\le A \le CB$ and $A \le CB$, respectively. The
notations $L^r_{\omega}$, $L^ p_ T$ and $L^q_x$ stand for
$L^r(\Omega)$, $L^p(0,T )$ and $L^q(\mathbb{T}^N)$, respectively,
whereas  we denote $L^{q}_{\sigma}= L_{\sigma}^{q}(\mathbb{T}^N)$
and $L^ p_ T L_{\sigma}^{q} = L^p(0,T
;L_{\sigma}^{q}(\mathbb{T}^N))$.

The rest part is organized as follows. In Section 2 we make some
preliminary discussions.  In Section 3 we give the proofs of
Theorems \ref{thm1.1} and \ref{thm1.2}.

\section{Preliminaries}
In this section we make some preliminary discussion. Let
$(\lambda_n)_{n\ge0}$ be the increasing sequence of all eigenvalues
of the minus Laplace $-\Delta$ on the torus $\mathbb{T}^N$, where
multiple eigenvalues are counted in their multiplicities. Let
$(e_n)_{n\ge0}$ be the corresponding sequence of eigenfunctions
suitably chosen such that they form a normalized orthogonal basis of
$L^2(\mathbb{T}^N)$. Recall that
in the case $N=1$ we have
$$
  \lambda_0=0,\quad \lambda_{2k-1}=\lambda_{2k}=4k^2\pi^2,\quad
  k=1,2,\cdots,
$$
and
$$
  e_0(x)=1,\quad e_{2k-1}(x)=\sqrt{2}\cos(2k\pi x),\quad
  e_{2k}(x)=\sqrt{2}\sin(2k\pi x),\quad  k=1,2,\cdots,
$$
so that
$$
  \|e_n\|_{L^2(\mathbb{T}^1)}=1,\quad
  \|e_n\|_{L^\infty(\mathbb{T}^1)}\leq\sqrt{2},\quad
  n=0,1,2,\cdots.
$$
Since $L^2(\mathbb{T}^N)$ is the $N$-tensor product of
$L^2(\mathbb{T}^1)$, the above inequalities still hold true for
general $N$ when $\sqrt{2}$ is replaced by $(\sqrt{2})^N$. Hence, we
have the following fundamental estimate:

\begin{lemma}\label{lem2.1}
For any $q\in[2,\infty]$ we have
\begin{equation}\label{eq2.1}
  \|e_n\|_{L^q(\mathbb{T}^N)}\lesssim 1.
\end{equation}
\end{lemma}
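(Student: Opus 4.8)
The plan is to reduce the estimate to the two endpoint bounds already recorded in the discussion preceding the lemma, namely
$$
 \|e_n\|_{L^2(\mathbb{T}^N)}=1,\qquad \|e_n\|_{L^\infty(\mathbb{T}^N)}\le(\sqrt{2})^N,
$$
both holding uniformly in $n$ as a consequence of the explicit one-dimensional trigonometric basis together with the tensor-product identification of $L^2(\mathbb{T}^N)$. Since the constant $(\sqrt{2})^N$ is independent of $n$, it suffices to control the intermediate $L^q$ norms by these endpoints with a constant depending only on $N$.

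First I would exploit the fact that the torus $\mathbb{T}^N$ has finite (indeed unit) Lebesgue measure, which gives the trivial embedding estimate
$$
 \|e_n\|_{L^q(\mathbb{T}^N)}\le|\mathbb{T}^N|^{1/q}\,\|e_n\|_{L^\infty(\mathbb{T}^N)}\le(\sqrt{2})^N
$$
for every $q\in[2,\infty]$, which already yields \eqref{eq2.1} with implied constant $(\sqrt{2})^N$. Alternatively, and perhaps more in keeping with the way these norms will later be used, one may interpolate: for $2\le q\le\infty$ set $\theta=2/q\in(0,1]$ so that $\tfrac1q=\tfrac{\theta}{2}+\tfrac{1-\theta}{\infty}$, and the log-convexity (H\"older interpolation) of $L^p$ norms gives
$$
 \|e_n\|_{L^q(\mathbb{T}^N)}\le\|e_n\|_{L^2(\mathbb{T}^N)}^{2/q}\,\|e_n\|_{L^\infty(\mathbb{T}^N)}^{1-2/q}\le 1\cdot\big((\sqrt{2})^N\big)^{1-2/q}\le(\sqrt{2})^N.
$$
In either route the bound is uniform in $n$, which is precisely the assertion of \eqref{eq2.1}.

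There is essentially no obstacle to overcome here: the only substantive input is the uniform $L^\infty$ control of the eigenfunctions, and that has already been supplied by the tensor-product argument in the text. The single point demanding attention is merely to verify that the resulting constant $(\sqrt{2})^N$ does not depend on $n$ (nor on $q$), so that the estimate holds uniformly across the entire orthonormal basis. This uniformity is the feature that will matter when the bound is inserted into the probabilistic and dispersive estimates for the randomized data in the later sections.
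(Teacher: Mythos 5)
Your proof is correct and follows essentially the same route as the paper: the paper states Lemma \ref{lem2.1} as an immediate consequence of the preceding tensor-product discussion giving $\|e_n\|_{L^2(\mathbb{T}^N)}=1$ and $\|e_n\|_{L^\infty(\mathbb{T}^N)}\le(\sqrt{2})^N$ uniformly in $n$, exactly the endpoint bounds you invoke. Your explicit passage to intermediate $q$ (via the unit measure of $\mathbb{T}^N$, or equivalently by H\"older interpolation) merely spells out the step the paper leaves implicit.
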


Next we quote the following preliminary result from \cite{BT081}
(see Lemma 3.1 there):

\begin{lemma}\label{lem2.2}
Let $(g_n(\omega))_{n\ge0}$ be a sequence of Gaussian random
variables. For any $r\ge2$ and $(c_n)_{n\ge0}\in l^2$, we have
\begin{equation}\label{eq2.2}
   \|\sum_{n\ge0}c_ng_n(\omega)\|_{L^r(\Omega)}\lesssim
  \sqrt{r}(\sum_{n\ge0} |c_n|^2)^{1/2}.
\end{equation}
\end{lemma}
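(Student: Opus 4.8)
The plan is to reduce the statement to a single scalar Gaussian moment by exploiting the two defining features of Gaussian variables: stability under linear combinations and the explicit growth of their absolute moments. First I would check that the random series actually defines an element of $L^2(\Omega)$. Since the $g_n$ are independent and centered with $\mathbb{E}[g_ng_m]=\delta_{nm}$, the partial sums $S_M=\sum_{0\le n\le M}c_ng_n$ satisfy $\mathbb{E}|S_M-S_{M'}|^2=\sum_{M'<n\le M}|c_n|^2$, so they form a Cauchy sequence in $L^2(\Omega)$ as soon as $(c_n)_{n\ge0}\in l^2$; denote the limit by $S=\sum_{n\ge0}c_ng_n$ and set $\sigma^2=\sum_{n\ge0}|c_n|^2$, so that $\mathbb{E}[S^2]=\sigma^2$.

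The second step is the reduction to one Gaussian. By $2$-stability each $S_M$ is a centered Gaussian of variance $\sum_{0\le n\le M}|c_n|^2$, and a convergent-in-$L^2$ limit of centered Gaussians with convergent variances is again a centered Gaussian; hence $S$ has the same law as $\sigma g$ with $g\in\mathcal{N}(0,1)$. Consequently $\|S\|_{L^r(\Omega)}=\sigma\,\|g\|_{L^r(\Omega)}$, and the whole estimate \eqref{eq2.2} collapses to the scalar bound $\|g\|_{L^r(\Omega)}\lesssim\sqrt{r}$ for $r\ge2$. I would obtain this from the closed form $\mathbb{E}|g|^r=\pi^{-1/2}2^{r/2}\Gamma(\tfrac{r+1}{2})$ together with Stirling's asymptotic $\Gamma(s)\sim\sqrt{2\pi}\,s^{s-1/2}e^{-s}$, which gives $(\mathbb{E}|g|^r)^{1/r}\sim\sqrt{r/e}$ and in particular a bound $C\sqrt{r}$ with $C$ independent of $r\ge2$.

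A more robust variant, which is essentially the argument of Burq--Tzvetkov and avoids computing the exact law, is to use only the moment generating function: by independence, $\mathbb{E}[e^{\lambda S}]=\prod_{n\ge0}\mathbb{E}[e^{\lambda c_ng_n}]=e^{\lambda^2\sigma^2/2}$, so $S$ is sub-Gaussian; a Chernoff estimate then yields $\mathbb{P}(|S|>t)\le2e^{-t^2/(2\sigma^2)}$, and integrating $\mathbb{E}|S|^r=\int_0^\infty rt^{r-1}\mathbb{P}(|S|>t)\,dt$ reproduces the $\sigma\sqrt{r}$ bound after the same Gamma-function estimate. In either approach the only genuine computation, and hence the main point to get right, is the uniform $\sqrt{r}$ control of the Gaussian moments; everything else is orthogonality and passage to the limit. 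I emphasize that the precise $\sqrt{r}$ growth (as opposed to something larger) is exactly what is exploited later: optimizing a bound of this form over $r$ is what produces the Gaussian-type large-deviation probabilities $p(\Omega_T)\ge1-c_1e^{-c_2\cdots}$ appearing in Theorems \ref{thm1.1} and \ref{thm1.2}.
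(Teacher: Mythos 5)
Your proof is correct, but note that the paper itself does not prove this lemma at all: it simply quotes it as Lemma 3.1 of Burq--Tzvetkov \cite{BT081}, so any self-contained argument is necessarily a ``different route.'' Your primary argument --- Gaussian stability reducing $\sum_n c_n g_n$ in law to $\sigma g$ with $\sigma^2=\sum_n|c_n|^2$, followed by the explicit moment formula $\mathbb{E}|g|^r=\pi^{-1/2}2^{r/2}\Gamma(\tfrac{r+1}{2})$ and Stirling to get $\|g\|_{L^r}\lesssim\sqrt{r}$ --- is the most elementary proof available, but it works only because the $g_n$ are exactly Gaussian; it silently uses that they are \emph{independent} standard normals, which the lemma's terse statement omits but the paper's standing hypotheses (i.i.d.\ $g_n\in\mathcal{N}(0,1)$) supply, so you should state that you are invoking this. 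Your second variant (moment generating function, Chernoff tail bound $\mathbb{P}(|S|>t)\le 2e^{-t^2/(2\sigma^2)}$, then integration of the tail) is essentially the original Burq--Tzvetkov proof, and is the one that generalizes: their Lemma 3.1 assumes only a uniform sub-Gaussian bound on the laws of the $g_n$, not Gaussianity, which is why the cited result covers a wider class of randomizations. In short: your first argument buys simplicity at the cost of generality, your second recovers the cited lemma in its natural generality, and both correctly isolate the one quantitative point that matters downstream, namely that the constant grows exactly like $\sqrt{r}$, which is what makes the exponential large-deviation bounds in Lemmas \ref{lem2.3}--\ref{lem2.4} (and hence the probability estimates in Theorems \ref{thm1.1}--\ref{thm1.2}) come out Gaussian after optimizing in $r$.
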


In what follows we derive some new estimates. Recall that $N=2,3$.

\begin{lemma}\label{lem2.3}
  Let $-1<s<0$ and $0<T\le 1$. For $f\in \mathcal{H}^{s}(\mathbb{T}^N)$, let
  $f^{\omega}$ be the randomization of $f$ given by \eqref{eq1.3}. Then we have
  the following assertions:

 \medskip
 \noindent $(i)$ If $-1<s\le -1+\frac{N}{4}$,  then for any $r\ge\frac{2N}{1+s}$ we have
\begin{equation}\label{eq2.3}
  \|e^{t\Delta}f^{\omega}\|_{L^{r}_{\omega}L^{\frac{4}{1-s}}_TL^{\frac{2N}{1+s}}_x}\lesssim \fg{r}T^{\frac{1+s}{4}}
   \|f\|_{\mathcal{H}^{s}}.
 \end{equation}
 As a consequence, if we set
 $ E_{\lambda,f,T;1}=\{\omega\in\Omega:
 \|e^{t\Delta}f^{\omega}\|_{L^{\frac{4}{1-s}}_TL^{\frac{2N}{1+s}}_x}\ge
 \lambda\}$,
   then there exist $c_1, c_2>0$ such that for all  $\lambda>0$ and $f\in \mathcal{H}^{s}(\mathbb{T}^N)$,
\begin{equation}\label{eq2.4}
  p(E_{\lambda,f,T;1})\le c_1
   e^{-c_2{\lambda^2}\|f\|_{\mathcal{H}^{s}}^{-2}T^{-\frac{1+s}{2}}}.
\end{equation}

\medskip
 \noindent $(ii)$ If $-1+\frac{N}{4}<s<0$, then for any $r\ge\frac{8}{4-N}$ we have
 \begin{equation}\label{eq2.5}
  \|e^{t\Delta}f^{\omega}\|_{L^{r}_{\omega}L^{\frac{8}{4-N}}_TL^{4}_x}
  \lesssim \fg{r}T^{\frac{s}{2}+\frac{4-N}{8}}\|f\|_{\mathcal{H}^{s}}.
 \end{equation}
 As a consequence, if we set $
 E_{\lambda,f,T;2}=\{\omega\in\Omega:
 \|e^{t\Delta}f^{\omega}\|_{L^{\frac{8}{4-N}}_TL^{4}_x}\ge
 \lambda\}$,
   then there exist $c_1,c_2>0$ such that for all  $\lambda>0$ and $f\in \mathcal{H}^{s}(\mathbb{T}^N)$,
\begin{equation}\label{eq2.51}
  p(E_{\lambda,f,T;2})\le c_1e^{-c_2\lambda^2\|f\|_{\mathcal{H}^{s}}^{2-2}T^{-s+\frac{N-4}{4}}}.
\end{equation}
\end{lemma}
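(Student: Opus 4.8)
The plan is to treat both assertions by a single probabilistic Strichartz scheme, the two cases differing only in the space--time exponents $(p,q)$ and in the bookkeeping of powers: for part $(i)$ I take $p=\frac{4}{1-s}$, $q=\frac{2N}{1+s}$, while for part $(ii)$ I take $p=\frac{8}{4-N}$, $q=4$. Writing $e^{t\Delta}f^{\omega}(x)=\sum_{n\ge0}\alpha_n e^{-t\lambda_n^2}g_n(\omega)e_n(x)$, the first move is to interchange the probabilistic and the space--time norms. In both cases the hypothesis $r\ge\max\{p,q\}$ holds (one checks $q\ge p$ throughout the range of $(i)$, and $p\ge q$ throughout the range of $(ii)$, for $N=2,3$), so Minkowski's integral inequality gives $\|e^{t\Delta}f^{\omega}\|_{L^r_\omega L^p_T L^q_x}\le\|e^{t\Delta}f^{\omega}\|_{L^p_T L^q_x L^r_\omega}$.

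Next I would apply the Gaussian moment bound of Lemma \ref{lem2.2} pointwise in $(t,x)$ with $c_n=\alpha_n e^{-t\lambda_n^2}e_n(x)$, obtaining $\|e^{t\Delta}f^{\omega}(x)\|_{L^r_\omega}\lesssim\fg{r}\big(\sum_{n\ge0}|\alpha_n|^2 e^{-2t\lambda_n^2}|e_n(x)|^2\big)^{1/2}$. Taking the $L^q_x$ norm and using $q\ge2$ to pull the square-summation outside by Minkowski in $L^{q/2}_x$, together with the uniform bound $\|e_n\|_{L^q_x}\lesssim1$ of Lemma \ref{lem2.1}, collapses the $x$-dependence and leaves $\fg{r}\big(\sum_{n\ge0}|\alpha_n|^2 e^{-2t\lambda_n^2}\big)^{1/2}$. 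A further application of Minkowski in $L^{p/2}_T$ (again $p\ge2$) reduces everything to controlling the scalar time integrals $\|e^{-2t\lambda_n^2}\|_{L^{p/2}_T}=\big(\int_0^T e^{-pt\lambda_n^2}\,dt\big)^{2/p}$.

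The crux is this last estimate. Using $\int_0^T e^{-pt\lambda_n^2}dt\le\min\{T,\,C\lambda_n^{-2}\}\le T^{\theta}(C\lambda_n^{-2})^{1-\theta}$ for a suitable $\theta\in(0,1)$, I would choose $\theta$ so that the resulting exponents match the target: in case $(i)$, $\theta=\frac{1+s}{1-s}$ produces $\|e^{-2t\lambda_n^2}\|_{L^{p/2}_T}\lesssim T^{\frac{1+s}{2}}\lambda_n^{2s}$, and in case $(ii)$, $\theta=\frac{4-N+4s}{4-N}$ produces $T^{\,s+\frac{4-N}{4}}\lambda_n^{2s}$; in both cases the constraint $\theta\in(0,1)$ is exactly the stated range of $s$. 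The spectral gap on the torus ($\lambda_n\gtrsim1$ for $n\ge1$) lets me replace $\lambda_n^{2s}$ by $(1+\lambda_n^2)^s$, while the zero mode $\lambda_0=0$ is handled separately using $T\le1$ and $s<0$; summing against $|\alpha_n|^2$ and invoking $\|f\|_{\mathcal{H}^s}^2\approx\sum_n|\alpha_n|^2(1+\lambda_n^2)^s$ yields \eqref{eq2.3} and \eqref{eq2.5}. I expect the main obstacle to be precisely the bookkeeping in this step---choosing $\theta$, tracking the powers of $T$ and $\lambda_n$ through the nested $2/p$ exponents, and treating the zero mode---rather than any conceptual difficulty.

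Finally, the tail bounds \eqref{eq2.4} and \eqref{eq2.51} follow from the moment estimates by the standard large-deviation argument: by Chebyshev, $p(E_{\lambda,f,T;j})\le\lambda^{-r}\|e^{t\Delta}f^{\omega}\|_{L^r_\omega L^p_T L^q_x}^{r}\le\big(C\fg{r}\,T^{\beta}\|f\|_{\mathcal{H}^s}\lambda^{-1}\big)^{r}$, where $\beta=\frac{1+s}{4}$ or $\beta=\frac{s}{2}+\frac{4-N}{8}$; optimizing over admissible $r\ge\max\{p,q\}$ (taking $r\sim\lambda^2 T^{-2\beta}\|f\|_{\mathcal{H}^s}^{-2}$, with small $\lambda$ absorbed by enlarging $c_1$) turns the $\fg{r}^{\,r}$ factor into the Gaussian decay $c_1 e^{-c_2\lambda^2\|f\|_{\mathcal{H}^s}^{-2}T^{-2\beta}}$, which is \eqref{eq2.4} for $2\beta=\frac{1+s}{2}$ and \eqref{eq2.51} for $2\beta=s+\frac{4-N}{4}$.
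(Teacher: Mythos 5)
Your proposal is correct and follows essentially the same route as the paper's proof: Minkowski to commute $L^r_\omega$ past the space--time norms (with the same observation that $r\ge\max\{p,q\}$ in each case), Lemma \ref{lem2.2} pointwise in $(t,x)$, Lemma \ref{lem2.1} to discard the eigenfunctions, a bound of the form $T^{\beta}\lambda_n^{2s}$ on the scalar time integrals with the zero mode treated separately, and finally Chebyshev with the optimal choice $r\sim\lambda^2T^{-2\beta}\|f\|_{\mathcal{H}^s}^{-2}$ (small $\lambda$ absorbed into $c_1$). The only cosmetic difference is in the time-integral step, where you interpolate $\min\{T,C\lambda_n^{-2}\}\le T^{\theta}(C\lambda_n^{-2})^{1-\theta}$ while the paper uses H\"older in $t$ against $\|e^{-2t\lambda_n^2}\|_{L^{1/|s|}_T}\lesssim\lambda_n^{2s}$; both yield the identical bounds $T^{\frac{1+s}{2}}\lambda_n^{2s}$ and $T^{s+\frac{4-N}{4}}\lambda_n^{2s}$.
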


 \begin{proof}\ \
  Given $f\in \mathcal{H}^{s}(\mathbb{T}^N)$, let
   $f=\sum_{n\ge0}\alpha_ne_n$. Then by \eqref{eq1.4} we have
\begin{align*}
  e^{t\Delta}f^{\omega}=\sum_{n\ge0}e^{-t\lambda^2_n}\alpha_ng_n(\omega)e_n(x).
 \end{align*}
We first assume that $-1<s\le -1+\frac{N}{4}$. In this case we have
$\frac{2N}{1+s}\ge\frac{4}{1-s}>2$. Thus for any
$r\ge\frac{2N}{1+s}$, by using the Minkowski inequality
  and Lemmas \ref{lem2.1} $\sim$ \ref{lem2.2} we see that
 \begin{align}
 \nonumber\|e^{t\Delta}f^{\omega}\|^2_{L^{r}_{\omega}L^{\frac{4}{1-s}}_TL^{\frac{2N}{1+s}}_x}&
 \lesssim \|e^{t\Delta}f^{\omega}\|^2_{L^{\frac{4}{1-s}}_TL^{\frac{2N}{1+s}}_x L^{r}_{\omega}}
 \\&\lesssim r \|\sum_{n\ge0}e^{-2t\lambda_n^2}|\alpha_n|^2|e_n(x)|^2\|_{L^{\frac{2}{1-s}}_TL^{\frac{N}{1+s}}_x}
  \nonumber\\& \lesssim r \|\sum_{n\ge0}|\alpha_n|^2e^{-2t\lambda_n^2}\|e_n\|_{L^{\frac{2N}{1+s}}}^2\|_{L^{\frac{2}{1-s}}_T}
 \nonumber\\& \lesssim r \|\sum_{n\ge0}|\alpha_n|^2e^{-2t\lambda_n^2}\|_{L^{\frac{2}{1-s}}_T}
 \nonumber\\& \lesssim r \Big(|\alpha_0|^2T^{\frac{1-s}{2}}+\sum_{n\ge1}|\alpha_n|^2\|e^{-2t\lambda_n^2}\|_{L^{\frac{1}{|s|}}_T}
     T^{\frac{1+s}{2}}\Big)
 \nonumber\\& \lesssim r \Big(|\alpha_0|^2T^{\frac{1-s}{2}}+\sum_{n\ge1}|\alpha_n|^2\lambda_n^{2s}
     T^{\frac{1+s}{2}}\Big)
 \nonumber\\& \lesssim r T^{\min\{\frac{1-s}{2}, \frac{1+s}{2}\}}\sum_{n\ge0} |\alpha_n|^2(1+\lambda_n^2)^{s}
 \nonumber\\& \lesssim r T^{ \frac{1+s}{2}}\|f\|_{\mathcal{H}^s}^2  \label{eq2.7}.
\end{align}
Hence \eqref{eq2.3} follows. From \eqref{eq2.3} and the
Bienaym\'{e}--Tchebychev inequality, it follows that there exists
constant $C>0$ (independent of $f$, $T$ and $r$) such that
\begin{align}\label{eq2.8}
  p(\omega\in\Omega:
\|e^{t\Delta}f^{\omega}\|_{L^{\frac{4}{1-s}}_TL^{\frac{2N}{1+s}}_x}\ge
 \lambda)\le (C\fg{r}\lambda^{-1}\|f\|_{\mathcal{H}^{s}}T^{\frac{1+s}{4}})^{r}.
\end{align}
From this estimate we can easily obtain \eqref{eq2.4}. Indeed, if
$\lambda>0$ is so small such that
$$\Big(\frac{\lambda}{Ce\|f\|_{\mathcal{H}^s}T^{\frac{1+s}{4}}}\Big)^2\le{\frac{2N}{1+s}},$$
then \eqref{eq2.4} follows from the facts that $p(E)\leq 1$ for any
event $E$ and that the function $e^{-x^2}$ has positive lower bound
for $x$ in any bounded set, otherwise we choose
$$r:=\Big(\frac{\lambda}{Ce\|f\|_{\mathcal{H}^{s}}T^{\frac{1+s}{4}}}\Big)^2>\frac{2N}{1+s}.$$
Then from \eqref{eq2.8} we see that
\begin{align*}
  p(E_{\lambda,f,T;1})&
  \le
  e^{-(Ce)^{-2}\lambda^2\|f\|^{-2}_{\mathcal{H}^{s}}T^{-\frac{1+s}{2}}},
\end{align*}
by which \eqref{eq2.4} follows. This proves the assertion $(i)$.

\medskip
Next we assume that $-1+\frac{N}{4}<s<0$. In this case, again by the
Minkowski inequality and Lemmas \ref{lem2.1} $\sim$ \ref{lem2.2} we
see that for any $r\geq\frac{8}{4-N}\geq 4$,
 \begin{align*}
  \|e^{t\Delta}f^{\omega}\|^2_{L^{r}_{\omega}L^{\frac{8}{4-N}}_TL^{4}_x}&
   \lesssim \|e^{t\Delta}f^{\omega}\|^2_{L^{\frac{8}{4-N}}_TL^{4}_xL^{r}_{\omega}}
  \\& \lesssim r\|\sum_{n\ge0}e^{-2t\lambda_n^2}|\alpha_n|^2|e_n(x)|^2\|_{L^{\frac{4}{4-N}}_TL^{2}_x}
  \\& \lesssim  r \|\sum_{n\ge0}e^{-2t\lambda_n^2}|\alpha_n|^2\|e_n\|_{L^{4}_x}^2\|_{L^{\frac{4}{4-N}}_T}
  \\& \lesssim   r\|\sum_{n\ge0}e^{-2t\lambda_n^2}|\alpha_n|^2\|_{L^{\frac{4}{4-N}}_T}
  \\& \lesssim r\Big(|\alpha_0|^2T^{\frac{4-N}{4}}+\sum_{n\ge1}|\alpha_n|^2
   \|e^{-2t\lambda_n^2}\|_{L^{\frac{1}{|s|}}_T}T^{s+\frac{4-N}{4}}\Big)
  \\& \lesssim rT^{\min\{\frac{4-N}{4},s+\frac{4-N}{4}\}}\Big(|\alpha_0|^2+
   \sum_{n\ge1}|\alpha_n|^2\lambda_n^{2s}\Big)
  \\& \lesssim  r T^{s+\frac{4-N}{4}}\|f\|_{\mathcal{H}^{s}}^2 .
\end{align*}
Hence \eqref{eq2.5} follows. From \eqref{eq2.5} and a similar
argument as before we obtain \eqref{eq2.51}.  This proves the
assertion $(ii)$ and completes the proof of Lemma \ref{lem2.3}.
\end{proof}

\begin{lemma}\label{lem2.4}
Let $0<T\le1$, $-1+\frac{N}{4}<s<0<\delta\le\frac{4-N}{16}$,
$\frac{8}{4-N}<m\le\frac{16}{4-N}$ and
$\frac{1}{m}+\delta=\frac{4-N}{8}$. For
$f\in\mathcal{H}^{s}(\mathbb{T}^N) $, let $f^{\omega}$ be the
randomization of $f$ given by \eqref{eq1.3}. Then for any $r\ge m $
we have
\begin{equation}\label{eq2.9}
  \|e^{t\Delta}f^{\omega}\|_{L^{r}_{\omega}L^{m}_{\delta;T}L^{4}_x}\lesssim
  \fg{r}T^{\varrho}\|f\|_{\mathcal{H}^{s}},
 \end{equation}
where $\varrho= \min\{\frac{1}{m},\frac{s}{2}+\!\frac{4-\!N}{8}\}$.
Consequently, if we set
${E}_{\lambda,f,T;3}\!=\!\{\omega\!\in\Omega:
\|e^{t\Delta}f^{\omega}\|_{L^{m}_{\delta;T}L^{4}_x} \ge \lambda\}$,
then there exist $c_1, c_2 > 0$ such that for all $\lambda>0$ and
$f\in \mathcal{H}^{s}(\mathbb{T}^N)$,
\begin{equation}\label{eq2.10}
  p( {E}_{\lambda,f,T;3})\le c_1
  e^{-c_2\lambda^2\|f\|_{\mathcal{H}^{s}}^{-2}T^{-2\varrho}}.
\end{equation}
\end{lemma}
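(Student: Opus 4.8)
The plan is to estimate $\|e^{t\Delta}f^{\omega}\|_{L^{r}_{\omega}L^{m}_{\delta;T}L^{4}_x}$ by following the same three-move strategy used in Lemma \ref{lem2.3}: interchange the $L^r_\omega$ norm inside, apply the Gaussian bound of Lemma \ref{lem2.2} to collapse the randomization, then reduce the spatial $L^4_x$ norm to the coefficients via Lemma \ref{lem2.1}. First I would expand $e^{t\Delta}f^{\omega}=\sum_{n\ge0}e^{-t\lambda_n^2}\alpha_n g_n(\omega)e_n(x)$. Since $r\ge m>\frac{8}{4-N}\ge 4\ge 2$, the Minkowski inequality lets me move $L^r_\omega$ to the innermost position, and Lemma \ref{lem2.2} gives (after squaring) a bound of the form
\begin{equation}\nonumber
  \|e^{t\Delta}f^{\omega}\|^2_{L^{r}_{\omega}L^{m}_{\delta;T}L^{4}_x}
  \lesssim r\,\Big\|\sum_{n\ge0}e^{-2t\lambda_n^2}|\alpha_n|^2|e_n(x)|^2\Big\|_{L^{m/2}_{2\delta;T}L^{2}_x},
\end{equation}
where the weight $t^{2\delta}$ now sits inside the time integral because $\|t^\delta g\|_{L^m_T}^2=\|t^{2\delta}g^2\|_{L^{m/2}_T}^{1/1}$ unfolds correctly. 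Applying Lemma \ref{lem2.1} in $x$ (using $\|e_n\|_{L^4}\lesssim 1$) removes the spatial variable and leaves me with a purely temporal estimate of $\sum_{n}e^{-2t\lambda_n^2}|\alpha_n|^2$ in the weighted norm $L^{m/2}_{2\delta;T}$.

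The crux is then the one-dimensional computation: I must bound $\big\|t^{2\delta}\sum_{n\ge0}e^{-2t\lambda_n^2}|\alpha_n|^2\big\|_{L^{m/2}_T}$. I would split off the zero mode $n=0$ (where $\lambda_0=0$), which contributes $|\alpha_0|^2\|t^{2\delta}\|_{L^{m/2}_T}\sim |\alpha_0|^2 T^{2\delta+2/m}=|\alpha_0|^2T^{(4-N)/4}$ after using $\frac1m+\delta=\frac{4-N}{8}$. For $n\ge1$ I would use Hölder in $t$ to separate the weight from the heat factor: writing the exponent pair so that $\|t^{2\delta}e^{-2t\lambda_n^2}\|_{L^{m/2}_T}\le \|t^{2\delta}\|_{L^{a}_T}\|e^{-2t\lambda_n^2}\|_{L^{b}_T}$ with the conjugate split, and controlling $\|e^{-2t\lambda_n^2}\|_{L^{b}_T}\lesssim \lambda_n^{-2/b}$ by the standard scaling $\int_0^\infty e^{-ct\lambda^2\cdot b}dt\sim \lambda^{-2/b}$ (extending the upper limit to $\infty$ since $T\le1$). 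I would choose the split to realize the exponent $\lambda_n^{2s}$ exactly, mirroring the appearance of $\|e^{-2t\lambda_n^2}\|_{L^{1/|s|}_T}\lesssim\lambda_n^{2s}$ in Lemma \ref{lem2.3}; this forces a term $T^{s/1}$-type power paired against $T$-powers from the weight, and taking the worse of the two temporal exponents gives $T^{\min\{2/m,\,s+(4-N)/4\}}=T^{2\varrho}$.

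Having obtained $\|e^{t\Delta}f^{\omega}\|^2_{L^{r}_{\omega}L^{m}_{\delta;T}L^{4}_x}\lesssim r\,T^{2\varrho}\sum_{n\ge0}|\alpha_n|^2(1+\lambda_n^2)^s=rT^{2\varrho}\|f\|_{\mathcal{H}^s}^2$ via \eqref{eq1.2}, taking square roots yields \eqref{eq2.9}. The probabilistic tail \eqref{eq2.10} then follows verbatim from the argument already spelled out after \eqref{eq2.7}: combine \eqref{eq2.9} with the Bienaym\'e--Tchebychev inequality to get $p(E_{\lambda,f,T;3})\le(C\sqrt{r}\,\lambda^{-1}\|f\|_{\mathcal{H}^s}T^{\varrho})^r$, and optimize in $r$ by setting $r=(\lambda/(Ce\|f\|_{\mathcal{H}^s}T^\varrho))^2$ whenever this exceeds $m$, with the trivial bound $p\le1$ absorbing small $\lambda$.

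The main obstacle I anticipate is the bookkeeping in the Hölder split for the $n\ge1$ terms: I must verify that the chosen conjugate exponents are admissible (both $\ge1$) for the full parameter range $\frac{8}{4-N}<m\le\frac{16}{4-N}$ and $-1+\frac N4<s<0$, and that the exponent actually produced equals $\min\{\frac2m,\,s+\frac{4-N}{4}\}$ rather than something weaker. In particular the constraint $\frac1m+\delta=\frac{4-N}{8}$ with $0<\delta\le\frac{4-N}{16}$ is exactly what keeps $\delta$ positive (so the weight is integrable) while forcing $m$ into the stated window; I would check that this is consistent with extracting the factor $\lambda_n^{2s}$ and with the requirement $\|t^{2\delta}\|_{L^{a}_T}<\infty$, i.e. that the weight power stays above $-1$. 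Once the admissibility of the split is confirmed, everything else is routine and parallels Lemma \ref{lem2.3}.
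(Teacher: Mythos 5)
Your overall architecture is exactly the paper's own: Minkowski to push $L^r_\omega$ innermost (using $r\ge m>4\ge 2$), Lemma \ref{lem2.2} to collapse the Gaussian sum, Lemma \ref{lem2.1} to discard the $e_n$'s, a mode-by-mode estimate of $\|t^{2\delta}e^{-2t\lambda_n^2}\|_{L^{m/2}_T}$, and then the Bienaym\'e--Tchebychev/optimization-in-$r$ tail argument. All of that is fine, including your treatment of the zero mode. The gap is in the one step you flagged and deferred: the H\"older split $\|t^{2\delta}e^{-2t\lambda_n^2}\|_{L^{m/2}_T}\le\|t^{2\delta}\|_{L^{a}_T}\|e^{-2t\lambda_n^2}\|_{L^{b}_T}$ forces $b=\frac{1}{|s|}$ (you need $\lambda_n^{-2/b}\lesssim\lambda_n^{2s}$, i.e. $\frac1b\ge|s|$, and any weaker decay is useless, since $f\in\mathcal{H}^{s}$ only controls $\sum_{n}|\alpha_n|^2(1+\lambda_n^2)^{s}$ and a bound in $\lambda_n^{2s'}$ with $s'>s$ need not be summable). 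H\"older then requires $\frac1a=\frac2m-|s|\ge0$, i.e. $s\ge-\frac2m$. But the hypothesis is only $s>-1+\frac N4=-\frac{4-N}{4}$, and since $m>\frac{8}{4-N}$ gives $\frac2m<\frac{4-N}{4}$, the subrange $-1+\frac N4<s<-\frac2m$ is nonempty for \emph{every} admissible $m$. On that subrange no admissible pair $(a,b)$ exists: you would need $|s|\le\frac1b\le\frac2m<|s|$. So the admissibility you proposed to ``confirm'' actually fails, and your proof does not cover $s$ near $-1+\frac N4$; this is precisely why the paper splits into the two cases \eqref{eq2.13} and \eqref{eq2.14}, of which your split reproduces only the second.

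The missing idea (the paper's case \eqref{eq2.13}) is that a \emph{positive} power of $t$ kept inside the norm, paired with the heat factor, buys extra decay in $\lambda_n$ through scaling --- something no H\"older split of the weight can produce. Concretely, for $-1+\frac N4<s\le-\frac2m$ write $t^{2\delta}=t^{\,s+\frac2m+2\delta}\cdot t^{-s-\frac2m}$; the first exponent equals $s+\frac{4-N}{4}>0$, so $t^{\,s+\frac2m+2\delta}\le T^{\,s+\frac{4-N}{4}}$ for $t\le T$, while the second exponent is $\ge0$, and the scaling computation $\int_0^\infty t^{\beta}e^{-mt\lambda_n^2}\,dt\sim\lambda_n^{-2(\beta+1)}$ with $\beta=(-s-\frac2m)\frac m2$ gives exactly $\|t^{-s-\frac2m}e^{-2t\lambda_n^2}\|_{L^{m/2}_T}\lesssim\lambda_n^{2s}$, hence $\|t^{2\delta}e^{-2t\lambda_n^2}\|_{L^{m/2}_T}\lesssim T^{\,s+\frac{4-N}{4}}(1+\lambda_n^2)^{s}$ for $n\ge1$. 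Supplementing your split (valid for $-\frac2m<s<0$) with this case yields the claimed $T^{2\varrho}$, $\varrho=\min\{\frac1m,\frac s2+\frac{4-N}{8}\}$, on the full parameter range; the remainder of your argument (summation against \eqref{eq1.2} and the tail bound \eqref{eq2.10}) then goes through as you wrote it.
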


\begin{proof}
Similarly as before, by writing $f=\sum_{n\ge0}\alpha_ne_n$, we have
$$
  t^{\delta}e^{t\Delta}f^{\omega}=\sum_{n\ge0}
  t^{\delta}e^{-t\lambda^2_n}\alpha_ng_n(\omega)e_n(x).
$$
Since $r\ge m>\frac{8}{4-N}\geq 4$, by the Minkowski inequality and
Lemmas \ref{lem2.1} $\sim$ \ref{lem2.2} we have
\begin{align}
 \|t^{\delta} e^{t\Delta}f^{\omega}\|^2_{L^{r}_{\omega}L^{m}_TL^{4}_x}&\lesssim
 r\|t^{\delta} e^{t\Delta}f^{\omega}\|^2_{L^{m}_TL^{4}_xL^{r}_{\omega}}
 \nonumber\\&\lesssim r\|\sum_{n\ge0}t^{2\delta}e^{-2t\lambda_n^2}|\alpha_n|^2|e_n(x)|^2\|_{L^{\frac{m}{2}}_TL^{2}_x}
 \nonumber\\&\lesssim r\|\sum_{n\ge0}|\alpha_n|^2t^{2\delta}e^{-2t\lambda_n^2}\|e_n\|^2_{L^{4}_x}\|_{L^{\frac{m}{2}}_T}
 \nonumber\\&\lesssim r\|\sum_{n\ge0}|\alpha_n|^2t^{2\delta}e^{-2t\lambda_n^2} \|_{L^{\frac{m}{2}}_T}
 \nonumber\\&\lesssim
 r\Big(T^{\frac{2}{m}}|\alpha_0|^2+\sum_{n\ge1}|\alpha_n|^2\|t^{2\delta}e^{-2t\lambda_n^2}\|_{L^{\frac{m}{2}}_T}\Big)
\label{eq2.12}.
\end{align}
To estimates $\|t^{2\delta}e^{-2t\lambda_n^2}\|_{L^{\frac{m}{2}}_T}$
(for $n\ge1$), we consider the two different cases
$-1+\frac{N}{4}\le s\le -\frac{2}{m}$ and  $-\frac{2}{m}<s<0$
separately. In the first case we have
 \begin{align}
  \|t^{2\delta}e^{-2t\lambda_n^2}\|_{L_T^{\frac{m}{2}}}&
  \lesssim{T}^{s+\frac{2}{m}+2\delta}\|t^{-s-\frac{2}{m}}e^{-2t\lambda_n^2}\|_{L_T^{\frac{m}{2}}}
  \nonumber\\&\lesssim{T}^{s+\frac{2}{m}+2\delta}\lambda_n^{2s}
  \lesssim{T}^{s+\frac{4-N}{4}}(1+\lambda_n^{2})^{s}.
\label{eq2.13}
\end{align}
In the second case we have
\begin{align}
 \|t^{2\delta}e^{-2t\lambda_n^2}\|_{L^{\frac{m}{2}}_T}
 \lesssim  T^{2\delta+\frac{2}{m}+s}\|e^{-2t\lambda_n^2}\|_{L_T^{\frac{1}{|s|}}}
 \lesssim {T}^{s+\frac{4-N}{4}}(1+\lambda_n^{2})^{s}.
\label{eq2.14}
\end{align}
Combining \eqref{eq2.12}, \eqref{eq2.13} and \eqref{eq2.14}, we see
that
\begin{align*}
  \|e^{t\Delta}f^\omega\|_{L^{r}_\omega
  L^{m}_{\delta;T}L^4_x}\lesssim
  \sqrt{r}T^{\min\{\frac{1}{m},\frac{s}{2}+\frac{4-N}{8}\}}
  \|f\|_{\mathcal{H}^s}.
\end{align*}
This proves \eqref{eq2.9}. Having proved \eqref{eq2.9},
\eqref{eq2.10} follows from a similar argument as before. This
completes the proof of Lemma \ref{lem2.4}.
\end{proof}

\section{Proofs of Theorems \ref{thm1.1} and \ref{thm1.2}}

In this section we give the proofs of Theorems \ref{thm1.1} and
\ref{thm1.2}. To this end, we convert the problem \eqref{eq1.1} with
$u_0=\fo$ into the following equivalent integral equation:
\begin{equation}\label{eq3.1}
  \vec{u}(t,\cdot)=\vec{u}^{\omega}_{\vec{f}}-\int^t_0
  e^{(t-\tau)\Delta}\mathbb{P}\nabla\cdot(\vec{u}\otimes\vec{u})(\tau,\cdot) d\tau,
\end{equation}
where $\vec{u}^{\omega}_{\vec{f}}=e^{t\Delta}\fo$, $\otimes$ denotes
tensor product between vectors, and $\mathbb{P}$ is the
Helmholtz-Weyl projection operator,  i.e.,
$\mathbb{P}=I+\nabla(-\Delta)^{-1}\nabla$. We make the unknown
variable transformation $\vec{u}\mapsto\vec{v}$ by letting
$\vec{u}=\vec{u}^{\omega}_{\vec{f}}+\vec{v}$. Then the above
integral equation is transformed into the following equivalent one:
\begin{equation}\label{eq3.1}
  \vec{v}(t, \cdot)=-\int^t_0 e^{(t-\tau)\Delta}\mathbb{P}\nabla
  \cdot[(\vec{u}^{\omega}_{\vec{f}}+\vec{v})\otimes
  (\vec{u}^{\omega}_{\vec{f}}+\vec{v})](\tau,\cdot)d\tau.
\end{equation}
We shall use this equivalent form of the integral equation
\eqref{eq3.1} to prove Theorems \ref{thm1.1} and \ref{thm1.2}. In
what follows, we denote by $K^{\omega}_{\vec{f}}$ the following
operator:
\begin{equation}\label{eq3.2}
  K^{\omega}_{\vec{f}}: {\vec{v}}\mapsto
  -\int^t_0 e^{(t-\tau)\Delta}\mathbb{P}\nabla\cdot[
  (\vec{u}^{\omega}_{\vec{f}}+\vec{v})\otimes(\vec{u}^{\omega}_{\vec{f}}
  +\vec{v})](\tau,\cdot)d\tau.
\end{equation}
\medskip

\textbf{Proof of Theorem \ref{thm1.1}}:\ \ Let $m$ and $\delta$ be
as in Theorem \ref{thm1.1}, i.e., $\frac{16}{4-N}\ge
m>\frac{8}{4-N}$ and $\delta=\frac{4-N}{8}-\frac{1}{m}$. Note that
$0<\delta\le \frac{4-N}{16}$. Given $T>0$, let $X=X_T$ be the space
\begin{equation*}
 X=C\Big([0,T];\mathcal{H}^{\frac{N}{2}-1}_\sigma(\mathbb{T}^N)\Big)\bigcap
 L^{m}_{\delta;T}L^{4}_{x,\sigma}\bigcap
 L^{\frac{8}{4-N}}_TL^{4}_{x,\sigma}
\end{equation*}
with norm
$$
  \|u\|_X=\|u\|_{L^{\infty}_T\mathcal{H}^{({N-2})/{2}}_x}+
  \|u\|_{L^{m}_{\delta;T}L^{4}_x}+\|u\|_{L^{{8}/{(4-N)}}_TL^{4}_x}
  \quad \mbox{for}\;\; u\in X.
$$
It is clear that $(X,\|\cdot\|_X)$ is a Banach space. We introduce
the event
\begin{equation}\label{eq3.10'}
  E_{\lambda,\vec{f},T}=\{\omega\in\Omega:
 \|\vec{u}^{\omega}_{\vec{f}}\|_{L^{m}_{\delta;T}L^{4}_{x}}+
 \|\vec{u}^{\omega}_{\vec{f}}\|_{L^{\frac{8}{4-N}}_TL^{4}_{x}}
 \ge\lambda\}.
\end{equation}
In what follows we prove that for any $\omega\in\Omega\backslash
E_{\lambda,\vec{f},T}$, the operator $K^{\omega}_{\vec{f}}$ is
well-defined in $X$, maps $X$ into itself, and is a contraction
mapping when restricted to some closed ball in $X$ provided
$\lambda$ is chosen sufficiently small. To this end, for a given
$\vec{v}\in X$, we estimate the three norms
$\|K^{\omega}_{\vec{f}}(\vec{v})\|_{L^{\infty}_T\mathcal{H}^{({N-2})/{2}}_x}$,
$\|K^{\omega}_{\vec{f}}(\vec{v})\|_{L^{m}_{\delta;T}L^{4}_x}$ and
$\|K^{\omega}_{\vec{f}}(\vec{v})\|_{L^{{8}/{(4-N)}}_TL^{4}_x}$ one
by one.

\textit{Step 1}:\ \ Estimate of
$\|K^{\omega}_{\vec{f}}(\vec{v})\|_{L^{\infty}_T\mathcal{H}^{{(N-2)}/{2}}_x}$.
To estimate this norm, we first prove that for any
$\vec{u},\vec{v}\in L^{m}_{\delta;T}L^{4}_{x,\sigma}$,
\begin{align}\label{eq3.3}
   \|\int_0^te^{(t-\tau)\Delta}\mathbb{P}\nabla\cdot&(\vec{u}\otimes
   \vec{v})(\tau)d\tau\|_{\mathcal{H}^{\frac{N-2}{2}}}
   \lesssim
   \|\vec{u}\|_{L^{m}_{\delta;T}L^{4}_x}
   \|\vec{v}\|_{L^{m}_{\delta;T}L^{4}_x}.
\end{align}
In fact, we have
\begin{align*}
   &\|\!\!\int_0^t\!e^{(t-\tau)\Delta}\mathbb{P}\nabla\cdot
   (\vec{v}\otimes\vec{v})(\tau)d\tau\|_{\mathcal{H}^{\frac{N-2}{2}}}
   \lesssim\int_0^t(t\!-\!\tau)^{-\frac{1}{2}}
   \Big(1\!+\!(t\!-\!\tau)^{-\frac{N-2}{4}}\Big)
   \||\vec{u}(\tau)||\vec{v}(\tau)|\|_{L^{2}_x}d\tau
 \\&\lesssim\Big\{\int_0^t\Big[(t-\tau)^{-\frac{1}{2}}
  \Big(1+(t-\tau)^{-\frac{N-2}{4}}\Big)\tau^{-2\delta}\Big]^{q}d\tau
  \Big\}^{\frac{1}{q}} \|\vec{u}\|_{L^{m}_{\delta;T}L^{4}_x}
  \|\vec{v}\|_{L^{m}_{\delta;T}L^{4}_x}
 \\
 &\lesssim \|\vec{u}\|_{L^{m}_{\delta;T}L^{4}_x}
   \|\vec{v}\|_{L^{m}_{\delta;T}L^{4}_x},
\end{align*}
where $\frac{1}{q}=1-\frac{2}{m}$ (note that $q>1$ and
$(\frac{1}{2}+\frac{N-2}{4}+2\delta)q=1$). This proves
\eqref{eq3.3}. Since $\vec{u}^{\omega}_{\vec{f}}\in
L^{m}_{\delta;T}L^{4}_{x,\sigma}$ and
$\|\vec{u}^{\omega}_{\vec{f}}\|_{L^{m}_{\delta;T}L^{4}_x}\le\lambda$
for $\omega\in\Omega\backslash E_{\lambda,\vec{f},T}$, by
\eqref{eq3.3} we immediately obtain
\begin{align}\label{eq3.4'}
   \|K^{\omega}_{\vec{f}}(\vec{v})\|_{L^{\infty}_T\mathcal{H}^{{(N-2)}/{2}}_x}
   \le C(\lambda^2+ \|\vec{v}\|_{L^{m}_{\delta;T}L^{4}_x}^2).
\end{align}

\textit{Step 2}:\ \  Estimate of
 $\|K^{\omega}_{\vec{f}}(\vec{v})\|_{L^{m}_{\delta;T}L^{4}_{x}}$.
 We first prove that for any $\vec{u},\vec{v}\in\!
 L^m_{\delta;T}L^4_{x,\sigma}\cap\\
 L^{\frac{8}{4-N}}_TL^4_{x,\sigma}$,
\begin{align}\label{eq3.5'}
 \|\int_0^t\!e^{(t-\tau)\Delta}\mathbb{P}\nabla\cdot(\vec{u}\otimes
 \vec{v})(\tau)d\tau &\|_{L^{m}_{\delta;T}L^{4}_{x}}\lesssim
 \min\{(\|\vec{u}\|_{L^m_{\delta;T}L^4_{x}}
 +\|\vec{u}\|_{L^{\frac{8}{4-N}}_{T}L^4_{x}})\|\vec{v}\|_{L^{\frac{8}{4-N}}_TL^4_x},
 \nonumber
 \\& (\|\vec{v}\|_{L^m_{\delta;T}L^4_{x}}+\|\vec{v}\|_{L^{\frac{8}{4-N}}_{T}L^4_{x}})
 \|\vec{u}\|_{L^{\frac{8}{4-N}}_TL^4_x}\}.
\end{align}
 In fact,  we have
 \begin{align*} \|t^{\delta}\! \int_0^te^{(t-\tau)\Delta}\mathbb{P}\nabla\cdot
 (\vec{u}\otimes\vec{v})(\tau)d\tau\|_{L^{m}_TL^{4}_{x}}
  &\lesssim\|\int_0^tt^{\delta}(t-\tau)^{-\frac{1}{2}-\frac{N}{8}}\||\vec{u}(\tau)|
  |{\vec{v}(\tau)}|\|_{L^{2}_{x}}d\tau\|_{L^{m}_T}
  \\&\lesssim\|\int_{\frac{t}{2}}^{t}t^{\delta}(t-\tau)^{-\frac{N+4}{8}}
  \||\vec{u}(\tau)||\vec{v}(\tau)|\|_{L^{2}_{x}}d\tau\|_{L^{m}_T}
  \\&\quad+\|\int_0^{\frac{t}{2}}t^{\delta}(t-\tau)^{-\frac{N+4}{8}}
  \||\vec{u}(\tau)||\vec{v}(\tau)|\|_{L^{2}_{x}}d\tau\|_{L^{m}_T}
  \\&:=I_1+I_2.
\end{align*}
By using the Hardy-Littlewood-Sobolev and the H\"{o}lder
inequalities, we have
$$\begin{aligned}
  I_1&\lesssim\|\int_{\frac{t}{2}}^{t}(t-\tau)^{-\frac{N+4}{8}}
  \|\tau^{\delta}|\vec{u}(\tau)||\vec{v}(\tau)|\|_{L^{2}_{x}}d\tau\|_{L^{m}_T}
 \\&\lesssim\min\{\|\vec{u}\|_{L^{m}_{\delta;T}L^{4}_{x}}\|\vec{v}\|_{L^{\frac{8}{4-N}}_TL^{4}_{x}},
 \|\vec{u}\|_{L^{\frac{8}{4-N}}_TL^{4}_{x}}\|\vec{v}\|_{L^{m}_{\delta;T}L^{4}_{x}}\}.
\end{aligned}$$
Similarly,
$$\begin{aligned}I_2&\lesssim
 \|\int^{\frac{t}{2}}_0(t-\tau)^{-\frac{N+4}{8}+\delta}\||\vec{u}(\tau)||\vec{v}(\tau)|\|_{L^{2}_{x}}d\tau\|_{L^{m}_T}
 \\&\lesssim\|\|\vec{v}(\tau)\|_{L^{4}_{x}}\|\vec{u}(\tau)\|_{L^4_x}\|_{L^{\frac{4}{4-N}}_T}\\&
 \lesssim\|\vec{u}\|_{L^{\frac{8}{4-N}}_TL^{4}_{x}}\|\vec{v}\|_{L^{\frac{8}{4-N}}_TL^{4}_{x}}.
\end{aligned}$$
This proves \eqref{eq3.5'}.  Since $\vec{u}^{\omega}_{\vec{f}}\in
L^{m}_{\delta;T}L^{4}_{x,\sigma}\bigcap
L^{\!\frac{8}{4-N}}_TL^4_{x,\sigma}$ and
$\|\vec{u}^{\omega}_{\vec{f}}\|_{L^{m}_{\delta;T}L^{4}_x}+
\|\vec{u}^{\omega}_{\vec{f}}\|_{L^{\!\frac{8}{4-N}}_TL^{4}_x}\! \le
\lambda$ for $\omega\in\Omega\backslash E_{\lambda,\vec{f},T}$, by
\eqref{eq3.5'} we immediately obtain
\begin{align}\label{eq3.6'}
   \|K^{\omega}_{\vec{f}}(\vec{v})\|_{L^{m}_{\delta;T}L^4_x}
   \le C(\lambda^2+   \|\vec{v}\|_{L^{m}_{\delta;T}L^{4}_x}^2).
\end{align}

 \textit{Step 3}:  Estimate of
 $\|K^{\omega}_{\vec{f}}(\vec{v})\|_{L^{\!\frac{8}{4-N}}_T\!L^{4}_{x}}$.
 Similarly as before, for any $\vec{u},\vec{v}\in
 L^{\!\frac{8}{4-N}}_T\!L^{4}_{x,\sigma}$ we have
\begin{align*}
  \|\int_0^t\!e^{(t-\tau)\Delta}\mathbb{P}\nabla\cdot(\vec{u}\otimes
  \vec{v})(\tau)d\tau\|_{L^{\frac{8}{4-N}}_{T}L^{4}_{x}}
  \lesssim &
  \|\int_0^t\!(t-\tau)^{-\frac{1}{2}-\frac{N}{8}}\||\vec{u}(\tau)|
  |\vec{v}(\tau)|\|_{L^2_x}d\tau\|_{L^{\frac{8}{4-N}}_{T}}
 \\ \lesssim & \|\vec{u}\|_{L^{\frac{8}{4-N}}_{T}L^4_{x}}\|\vec{v}\|_{L^{\frac{8}{4-N}}_TL^4_x}.
\end{align*}
From this inequality and a similar argument as before we conclude
that for any $\omega\in\Omega\backslash E_{\lambda,\vec{f},T}$,
\begin{align}\label{eq3.8'}
   \|K^{\omega}_{\vec{f}}(\vec{v})\|_{L^{\frac{8}{4-N}}_{T}L^4_x}
   \le C(\lambda^2+   \|\vec{v}\|_{L^{\frac{8}{4-N}}_{T}L^{4}_x}^2).
\end{align}

Combining \eqref{eq3.4'}, \eqref{eq3.6'} and \eqref{eq3.8'}, we see
that $K^{\omega}_{\vec{f}}$ is well defined in $X$, and
\begin{equation}\label{eq3.10}
 \|K^{\omega}_{\vec{f}}(\vec{v})\|_{X}\le C_1
   \Big(\lambda^2+\|\vec{v}\|_{X}^2\Big).
\end{equation}
Moreover, by a similar argument we can also prove that for any
$\vec{v}_1,\vec{v}_2\in X$ and $\omega\in \Omega\backslash
E_{\lambda,\vec{f},T}$, there holds
\begin{equation}\label{eq3.11}
 \|K^{\omega}_{\vec{f}}(\vec{v}_1)-K^{\omega}_{\vec{f}}(\vec{v}_2)\|_{X}\le
 C_2\Big(\lambda +\|\vec{v}_1\|_{X} +\|\vec{v}_2\|_{X}\Big)\|\vec{v}_1-\vec{v}_2\|_{X}.
\end{equation}
From \eqref{eq3.10} and \eqref{eq3.11} we easily see that if we set
$\lambda_0=\min\{(2C_1)^{-1},(6C_2)^{-1}\}$, then for any
$0<\lambda\leq\lambda_0$ and $\omega\in\Omega\backslash
E_{\lambda,\vec{f},T}$, the operator $K^{\omega}_{\vec{f}}$ maps the
closed ball $\overline{B}(0,\lambda)$ in $X$ into itself and is a
contraction mapping when restricted to this closed ball, so that it
has a unique fixed point in this closed ball.

Now arbitrarily choose a $\lambda$ in $(0,\lambda_0]$ and fix it. We
denote
\begin{align*}
  \Omega_{T}=\Omega\backslash E_{\lambda,{\vec{f}},T}\!
  =\{\omega\in\Omega:\|\vec{u}^{\omega}_{\vec{f}}\|_{L^{m}_{\delta;T}L^{4}_{\sigma}(\mathbb{T}^N)}
  +\|\vec{u}^\omega_{\vec{f}}\|_{L^{\frac{8}{4-N}}_TL^4_\sigma(\mathbb{T}^N)}< \lambda\}.
\end{align*}
Let $c_1$, $c_2$ and $\varrho$ as in Lemmas \ref{lem2.3} $\sim$
\ref{lem2.4}. Then by noticing that
$E_{\lambda,{\vec{f}},T}\subseteq
E_{\frac{\lambda}{2},{\vec{f}},T;2}\cup
E_{\frac{\lambda}{2},{\vec{f}},T;3}$ and using these lemmas we see
that for any $0<T\leq 1$,
\begin{equation*}
 p(\Omega_{T})=1-p(E_{\lambda,{\vec{f}},T})\ge
 1-2c_1e^{-c_2(\frac{\lambda}{2})^2\|f\|_{\mathcal{H}^s}^{-2}T^{-2\varrho}}.
\end{equation*}
Hence the second part of the assertion in Theorem \ref{thm1.1} is
proved. Next, for any $j\in\mathbb{N}$, we choose
$T_{j}:=T(j,N,s,\lambda,\|\vec{f}\|_{\mathcal{H}^s})\in(0, 1)$ so
small such that
$$
  c_1e^{-c_2(\frac{\lambda}{2})^2\|\vec f\|_{\mathcal{H}^s}^{-2}T_j^{-2\varrho}}\leq
  2^{-j-1}.
$$
Then
\begin{equation}\nonumber
 p(\Omega_{T_{j}})\ge
 1-2c_1e^{-c_2(\frac{\lambda}{2})^2\|f\|_{\mathcal{H}^s}^{-2}T_j^{-2\varrho}}\ge
 1-2^{-j}.
\end{equation}
We now let $\Sigma=\bigcup_{j\in\mathbb{N}}\Omega_{T_j}$. Clearly,
$p(\Sigma)=1$. Since for any $\omega\in\Sigma$ there exists a
corresponding $j=j(\omega)\in\mathbb{N}$ such that $\omega\in
\Omega_{T_{j(\omega)}}$, by letting $T_{\omega}=T_{j(\omega)}$, we
see that the first part of the assertion in Theorem \ref{thm1.1}
follows. This completes the proof of Theorem \ref{thm1.1}.

\medskip
\textbf{Proof of Theorem \ref{thm1.2}}:\ \ Let $s$ be as in Theorem
\ref{thm1.2}, i.e., $-1<s\le -1+\frac{N}{4}$. For given $T>0$, let
$Y$ be the Banach space
$L^{\frac{4}{1-s}}_TL^{\frac{2N}{1+s}}_{x,\sigma}$. By using some
similar inequalities as before we see that for any $v\in Y$,
\begin{align}\label{eq3.12}
   \|\int_0^t\!\!e^{(t-\tau)\Delta}\mathbb{P}
   \nabla\!\cdot\!(\vec{u}\otimes\vec{v})(\tau)d\tau\|_{Y}
   &=\|\int_0^t\!\!e^{(t-\tau)\Delta}\mathbb{P}
   \nabla\!\cdot\!(\vec{u}\otimes\vec{v})(\tau)d\tau\|_{L^{\!\frac{4}{1-s}}_T\!L^{\!\frac{2N}{1+s}}_x}
\nonumber
   \\&\lesssim \|\!\int_0^t\! (t\!-\!\tau)^{-\frac{1}{2}-\frac{1+s}{4}}
   \||\vec{u}(\tau)||\vec{v}(\tau)|\|_{\!L^{\!\frac{N}{1+s}}_{x}}d\tau
   \|_{L^{\!\frac{4}{1-s}}_T}
\nonumber
 \\&\lesssim\||\vec{u}||\vec{v}|\|_{L^{\frac{2}{1-s}}_TL^{\!\frac{N}{1+s}}_{x}}
 \lesssim \|\vec{u}\|_{Y}\|\vec{v}\|_{Y}.
\end{align}
Now set
\begin{equation*}
  \widetilde{E}_{\lambda,\vec{f},T}=\{\omega\in\Omega:
 \|\vec{u}^{\omega}_{\vec{f}}\|_{Y}=
 \|\vec{u}^{\omega}_{\vec{f}}\|_{L^{\frac{4}{1-s}}_TL^{\frac{2N}{1+s}}_{x}}
 \ge\lambda\}.
\end{equation*}
Since $\|\vec{u}^\omega_{\vec{f}}\|_{Y}\le \lambda$ for $\omega\in
\Omega\backslash\widetilde E_{\lambda,\vec{f},T}$, using
\eqref{eq3.12} we easily see that for any $\omega\in
\Omega\backslash\widetilde E_{\lambda,\vec{f},T}$ and $v,\vec{v}_1,
\vec{v}_2\in Y$ there hold
\begin{align*}
 \|K^\omega_{\vec{f}}(\vec{v})\|_{Y}
 \le & C_1(\lambda^2+\|\vec{v}\|_{Y}),
\\
 \|K^\omega_{\vec{f}}(\vec{v_1})-K^\omega_{\vec{f}}(\vec{v_2})\|_Y\le &
 C_2\Big(\lambda+\|\vec{v}_1\|_Y+\|\vec{v}_2\|_Y\Big)\|\vec{v}_1-\vec{v}_2\|_Y.
\end{align*}
From these estimates and a similar argument as in the proof of
Theorem \ref{thm1.1}, we obtain the desired assertion in Theorem
\ref{thm1.2}. This completes the proof of Theorem \ref{thm1.2}.

\bibliographystyle{amsplain}

\begin{thebibliography}{}
\bibitem{BT081}
 N.~Burq and N. Tzvetkov, Random data Cauchy theory for supercritical
wave equations I: Local existence theory, \textit{Invent. Math.},
\textbf{173(3)} (2008), 449--475.
\bibitem{BT082}
 N.~Burq and N.~Tzvetkov, Random data Cauchy theory for supercritical
wave equations II: A global existence result, \textit{Invent.
Math.}, \textbf{173(3)} (2008), 477--496.
\bibitem{CAL90}
C.~Calder$\acute{o}$n, Existence of weak solutions for the
Navier-Stokes equations with initial data in $L^p$, \textit{Trans.
A.M.S.}, \textbf{318} (1990), 179--207.
\bibitem{CAN95}
M.~Cannone, Ondelettes, paraproduits et Navier-Stokes, Dierot
Editeur, Paris, 1995.
\bibitem{CAN04} M. Cannone, Harmonic analysis tools for solving the incompressible
Navier-Stokes equations, in \textit{Handbook of Mathematical Fluid
Dynamics vol. III} (S. Friedlander and D. Serre edit.), pp.
161--244, Elsevier, North Holland: 2004.
\bibitem{CUI} S. Cui, Global weak solutions for the Navier-Stokes equations for
$\dot{B}^{-1(\ln)}_{\infty\infty}+\dot{B}_{\dot{X}_r}^{-1+r,\frac{2}{1-r}}+L^2$
initial data, submitted (see arXiv:1006.0058 for preprint).
\bibitem{FUJK64} H.~Fujita and T.~Kato,
On the Navier-Stokes initial value problem I, \textit{Arch. Rat.
Mech. Anal.}, \textbf{16} (1964), 269--315.
\bibitem{KOCT01} H.~Koch and D.~Tataru, Well-posedness for the Navier-Stokes equations,
\textit{Adv. Math.}, \textbf{157} (2001), 22--35.
\bibitem{LEM02} P. G. Lemari\'{e}-Rieusset, \textit{Recent developments in the
  Navier-Stokes problems}, Research Notes in Mathematics, Chapman \& Hall/CRC,
  2002.
\bibitem{LER34}
J.~Leray, Sur le mouvement  d'un  liquide  visqueux  emplissant
l'espace, \textit{Acta Math.},  \textbf{63} (1934), 193--248.
\bibitem{PLA96}
F.~Planchon, Solutions globales et comportement asymptotique pour
les $\acute{e}$quations de Navier-Stokes, Th$\grave{e}$se, Ecole
Polytechnique, 1996.
\end{thebibliography}
\end{document}